\definecolor{citegreen}{rgb}{0,0.6,0}
\definecolor{refred}{rgb}{0.8,0,0}
\newtheorem{thm}{Theorem}[section]
\newtheorem{prop}[thm]{Proposition}
\theoremstyle{definition}
\newtheorem{defn}[thm]{Definition}
\theoremstyle{remark}
\newtheorem{rem}[thm]{Remark}
\numberwithin{equation}{section}
\def\Ric{{\mathrm {Ric}}}
\def\R{\mathbb R}
\def\R{{{\mathbb R}}}
\newcommand{\intbar}{\etaathop{\int\etaakebox(-13.5,0){\rule[4pt]{.7em}{0.3pt}}
\kern-6pt}\nolimits}
\newcommand{\be}{\begin{equation}}
\newcommand{\ee}{\end{equation}}
\newcommand{\bea}{\begin{equation*}}
\newcommand{\eea}{\end{equation*}}
\title{A Liouville theorem for superlinear heat equations on Riemannian manifolds}
\author{Daniele Castorina \footnote{Dipartimento di Matematica e Applicazioni, Universit\`a di Napoli, Via Cintia, Monte S. Angelo 80126 Napoli, Italy, daniele.castorina@unina.it} \and Carlo Mantegazza \footnote{Dipartimento di Matematica e Applicazioni, Universit\`a di Napoli, Via Cintia, Monte S. Angelo 80126 Napoli, Italy, c.mantegazza@sns.it} \and Berardino Sciunzi \footnote{Dipartimento di Matematica e Informatica, Universit\`a della Calabria, V. P. Bucci, Arcavacata di Rende (CS), Italy, sciunzi@mat.unical.it.}}
\begin{document}

 \maketitle
\begin{abstract} We study the triviality of the solutions of weighted superlinear heat equations on Riemannian manifolds with nonnegative Ricci tensor. We prove a Liouville--type theorem for solutions bounded from below with nonnegative initial data, under an integral growth condition on the weight.
\end{abstract}

\section{Introduction}

The purpose of this note is the study of the triviality (constancy in space and time) of the solutions of weighted superlinear heat equations on Riemannian manifolds with nonnegative Ricci tensor. We will prove a Liouville--type theorem for solutions which are bounded from below with nonnegative initial data, under an integral growth condition on the weight. Precisely, we investigate the nonexistence of classical ($C^2$ in space and $C^1$ in time) nontrivial solutions $u \in C^{2,1} (M \times (0,+\infty)) \cap C^0 (M \times [0,+\infty))$ of the problem
\begin{equation}\label{eq-1}
u_t=\Delta u+ |u|^{p} V \quad \text{ in } M \times (0,+\infty) \quad \text{ and } \quad u (\cdot,0) = u_0\geq 0,
\end{equation}
where $(M,g)$ is a smooth, connected and complete Riemannian manifold of dimension $n$, $V$ is a given positive function, $\Delta$ denotes the Laplace--Beltrami operator on $(M,g)$ and $p>1$. Concerning the positive weight $V$, we will assume that $V \in C^0 (M\times (0,+\infty))$.

In the last decades, finite time blow--up and global existence of solutions to semilinear parabolic equations like the one of problem~\eqref{eq-1} in general domains of $\R^n$ and various time intervals $I\subset\R$ have attracted quite a lot of interest in the literature. When $M=\R^n$ and $V \equiv 1$, that is, for the simpler equation $u_t=\Delta u+|u|^{p}$, in the pioneering paper~\cite{Fujita}, Fujita proved the nonexistence of nonzero nonnegative solutions on $M\times[0+\infty)$, for exponents $p>1$ below the optimal threshold $p_F=1+2/n$ (see Remark~\ref{sharp}). Subsequently, several generalizations to more quasilinear parabolic operators ($p$--Laplacian or porous medium--type, for instance) have been developed, see~\cite{galak2, galak3, mitipoho,pohotesei} and references therein. At the same time, extensions have been carried out also in the context of Riemannian manifolds: in the papers~\cite{qszhang1, qszhang2} (see also~\cite{mastrolia2018} and references therein), Zhang proved the same result of Fujita for the equation $u_t=\Delta u+ |u|^{p} V$ on a Riemannian manifold $(M,g)$, under some geometrical assumptions and growth conditions on the weight $V$ in the case this latter is time--independent. In this work we remove the positivity hypothesis by such conclusion of nonexistence of nonzero solutions (equivalently, {\em of triviality}), assuming only a bound from below that, as we will see, implies the positivity of the solutions, moreover, we extend the work of Zhang by considering weights $V$ which can depend also on time. 

These Liouville--type results are special cases of a general research topic about finding conditions under which the solutions of the equation $u_t=\Delta u+ |u|^{p} V$ on $M\times I$, where $I\in\R$ is an unbounded interval, are actually zero or depending only on time (when $V$ depends only on time), hence given by the (possibly explicit) 1--dimensional profile obtained solving the ODE $u_t= |u|^{p} V$. More precisely, when the interval $I$ is $[T,+\infty)$, $(-\infty,T)$, for some $T\in\R$, or the whole $\R$, we speak of {\em immortal}, {\em ancient} and {\em eternal} solutions, respectively. Then, these theorems can be seen as ``classification results'' for the solutions, the immortal case being the one considered in this paper and in the above mentioned literature. Clearly, ancient and immortal solutions are special cases of the eternal ones, hence our conclusions apply also to these latter, even if it must be said that being eternal is a far more restrictive assumption than being only immortal, indeed stronger results are available (see~\cite{qsbook}, for instance). The interest in these special solutions is given by the fact that they arise as blow--up limits when the solutions of semilinear parabolic equations develop a singularity, that is, for instance, a solution in a {\em bounded} time interval $(0,T)$ which becomes unbounded as $t \to T^-$. 

In the case $V \equiv 1$, the analysis of the triviality of ancient and eternal solutions on Riemannian manifolds has been recently partially addressed by the first two authors in~\cite{cama1} and~\cite{cama2}. Anyway, even in such simplest case, some results holding in the Euclidean space still do not possess an analogue when the ambient is a Riemannian manifold, under the (expected) optimal geometric assumptions. For instance (up to our knowledge), we mention the expected constancy in space of the positive ancient solutions of the equations $u_t=\Delta u+ |u|^{p}$ on an $n$--dimensional Riemannian manifold with nonnegative Ricci tensor, for every $p>1$ if $n=2$ and for $1<p< \frac{n(n+2)}{(n-1)^2}$ when $n\geq 3$ (see Giga and Kohn~\cite{gigakohn}, Bidaut--V\'eron~\cite{bveron}, Merle and Zaag~\cite{merlezaag}, Polacik, Quittner and Souplet~\cite{pqs2}, for the case $\R^n$). We refer the interested reader to the papers of Souplet and Zhang~\cite{souzha}, Polacik, Quittner and Souplet~\cite{pqs2} and to the book of Quittner and Souplet~\cite{qsbook}, for the state--of--the--art in the Euclidean space.

Besides trying to extend these results to Riemannian manifolds, the general case of ancient/eternal solutions of the equation $u_t=\Delta u+ |u|^{p} V$ with a nonconstant positive weight $V$, will be subject of future investigation (analogously to what we are doing in this paper), in particular in the special case of a weight $V$ depending only on time.\\

Throughout all the paper, we will assume the nonnegativity of the Ricci tensor $\Ric$ of the manifold $(M,g)$, with the meaning that all its eigenvalues are nonnegative. By the Bishop--Gromov inequality (see Section 11.1.3 in~\cite{petersen1}), this implies that there exists a constant $C>0$ such that
\begin{equation}\label{Ric1}
\mu(B_R) \leq C R^n \; \text{ for any } R>0, 
\end{equation}
where $\mu$ is the canonical volume measure of $(M,g)$, $n$ is the dimension of the manifold and $B_R$ denotes any geodesic ball in $M$ with radius $R$.
	
\begin{defn}
We say that the conditions {\bf (VOL)} hold for $V$ if there exist a point $x_0\in M$ and positive
constants $C$, $R_0$ and $\alpha$ such that for every $R\geq R_0$, we have
\begin{equation}\label{VOL1}
\int_{R^{2}/4}^{R^{2}} \int_{B_R} V(x,t)^{-\frac{1}{p-1}} \,d\mu \,dt \leq 
CR^{2+\alpha}
\end{equation}
and
\begin{equation}\label{VOL2}
\int_{0}^{R^2} \int_{B_R \setminus B_{R/2}} V(x,t)^{-\frac{1}{p-1}} \,d\mu \,dt \leq
CR^{2+\alpha},
\end{equation}
where $B_R=B_R(x_0)$ is the geodesic ball centered in $x_0$ with radius $R$.
\end{defn}

Our main result is the following theorem.

\begin{thm}\label{main}
Let $(M,g)$ be a complete $n$--dimensional Riemannian manifold with $\Ric \geq 0$. Let $u \in C^{2,1}(M\times(0,+\infty))\cap C^{0}(M\times [0,+\infty))$ be a classical solution of the equation $u_t = \Delta u + |u|^p V$ in $M \times (0,+\infty)$, bounded from below and such that $u (\cdot, 0) = u_0 \geq 0$, with $V \in C^{0}(M\times (0,+\infty))$ positive and satisfying conditions {\bf (VOL)}, with $1 < p \leq 1 + {2}/{\alpha}$. Then $u \equiv 0$.
\end{thm}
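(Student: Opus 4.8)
The plan is to combine a parabolic maximum principle, which upgrades the lower bound and nonnegative initial datum to genuine nonnegativity of $u$, with a rescaled test-function (Mitidieri--Pohozaev) argument in which the weight $V$ enters exactly through the quantity $V^{-1/(p-1)}$ appearing in {\bf (VOL)}. First I would establish that $u\geq 0$ on $M\times[0,+\infty)$: since $|u|^pV\geq 0$, the function $u$ is a supersolution of the heat equation, $u_t-\Delta u\geq 0$, and being bounded from below with $u(\cdot,0)=u_0\geq 0$, the nonnegativity $u\geq 0$ follows from a parabolic minimum principle, available here because $\Ric\geq 0$ forces stochastic completeness and the polynomial volume growth \eqref{Ric1}. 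From this point the equation reads $u_t=\Delta u+u^pV$ with $u\geq 0$, and it suffices to prove $\int_0^{+\infty}\int_M u^pV\,d\mu\,dt=0$, since $V>0$ then forces $u\equiv 0$.

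Next I would test the equation against $\xi=\zeta^s$, where $\zeta(x,t)=\chi\bigl(d(x,x_0)/R\bigr)\,\theta\bigl(t/R^2\bigr)$ is a parabolically scaled cutoff ($\chi\equiv1$ on $[0,1/2]$ and vanishing past $1$, $\theta\equiv1$ near $0$ and vanishing past $1$), $d$ is the geodesic distance, and $s\geq 2p/(p-1)$. Integrating over $M\times(\tau,+\infty)$ and letting $\tau\to0^+$, the spatial integration by parts is justified by compact support while the Laplacian comparison theorem under $\Ric\geq0$ controls $\Delta\bigl(\chi(d/R)\bigr)$ (the singular part on the cut locus carrying the favourable sign); a nonnegative initial boundary term $\int_M u_0\,\xi(\cdot,0)\,d\mu\geq 0$ appears with a favourable sign and may be discarded, yielding $\int_0^{+\infty}\!\int_M u^pV\,\xi\,d\mu\,dt \leq \int_0^{+\infty}\!\int_M u\,(|\xi_t|+|\Delta\xi|)\,d\mu\,dt$. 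Writing $u(|\xi_t|+|\Delta\xi|)=(u\,V^{1/p}\xi^{1/p})\,(V^{-1/p}\xi^{-1/p}(|\xi_t|+|\Delta\xi|))$ and applying H\"older's inequality with exponents $p$ and $p/(p-1)$ over the set $\Om_R$ where $\xi_t$ or $\Delta\xi$ is nonzero gives
\[
\int_0^{+\infty}\!\!\int_M u^pV\,\xi\,d\mu\,dt \;\leq\;\Bigl(\int\!\!\int_{\Om_R}u^pV\,\xi\Bigr)^{\frac1p}\Bigl(\int\!\!\int_{\Om_R}V^{-\frac1{p-1}}\xi^{-\frac1{p-1}}(|\xi_t|+|\Delta\xi|)^{\frac{p}{p-1}}\Bigr)^{\frac{p-1}{p}} .
\]

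The choice $s\geq 2p/(p-1)$ makes $\xi^{-1/(p-1)}(|\xi_t|+|\Delta\xi|)^{p/(p-1)}\leq C\,R^{-2p/(p-1)}$ times a bounded power of $\zeta$, because each of $|\zeta_t|$, $|\Delta\zeta|$ and $|\nabla\zeta|^2$ is $O(R^{-2})$. Splitting $\Om_R$ into its time-boundary part (where $t\in[R^2/4,R^2]$, $x\in B_R$) and its space-boundary part (where $x\in B_R\setminus B_{R/2}$, $t\in[0,R^2]$) and invoking \eqref{VOL1} and \eqref{VOL2} respectively, the second factor is bounded by $C\,\bigl(R^{\,2+\alpha-2p/(p-1)}\bigr)^{(p-1)/p}=C\,\bigl(R^{\,\alpha-2/(p-1)}\bigr)^{(p-1)/p}$, and the hypothesis $p\leq 1+2/\alpha$ is precisely the inequality $\alpha-2/(p-1)\leq0$. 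Since $\xi$ has compact support and the integrand is continuous, $J_R:=\int\!\int u^pV\,\xi$ is finite; in the strict (subcritical) case one bounds $\int\!\int_{\Om_R}u^pV\xi\leq J_R$ and divides to get $J_R\leq C\,R^{\,\alpha-2/(p-1)}\to0$, whence $\int_0^{+\infty}\int_M u^pV\,d\mu\,dt=0$ by monotone convergence.

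The main obstacle is the endpoint $p=1+2/\alpha$, where the second factor is only bounded. There the same inequality gives $J_R\leq C$ uniformly, so $I:=\int_0^{+\infty}\int_M u^pV\,d\mu\,dt<\infty$; I would then exploit that $\Om_R$ escapes to infinity in space-time as $R\to\infty$, so that $\int\!\int_{\Om_R}u^pV\,\xi\to0$ by absolute continuity of the finite integral $I$ (the time part because $t\geq R^2/4\to\infty$, the space part because $d(x,x_0)\geq R/2\to\infty$). Feeding this back into the H\"older estimate forces $I=0$, hence $u\equiv0$. Besides this critical-exponent refinement, the technical points requiring care are the noncompact maximum principle used in the positivity step and the distributional Laplacian-comparison bound for the distance-function cutoff.
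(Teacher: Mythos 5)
Your proposal is correct, and its main (triviality) part is essentially the paper's own argument: you test with a parabolically scaled cutoff raised to a power at least $p/(p-1)$ (the paper uses exactly $q=p/(p-1)$, which suffices), discard the good-signed initial term, use Laplacian comparison with $\chi'\leq 0$ so that the cut-locus singularity has the favourable sign, apply H\"older over the region where the cutoff's derivatives live, split that region into the time-boundary part and the space-boundary part so that \eqref{VOL1} and \eqref{VOL2} apply, and arrive at the same exponent $\alpha-2/(p-1)\leq 0$; your treatment of the critical case $p=1+2/\alpha$ (uniform bound $\Rightarrow$ global finiteness $\Rightarrow$ the H\"older factor over the escaping region $\Om_R$ vanishes by absolute continuity) is precisely the paper's closing step. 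The one genuine divergence is the positivity step. The paper does not invoke stochastic completeness: it gives a self-contained Ecker--Huisken-type argument, setting $v=-u\leq A/2$, $F(s)=1/(A-s)$ (which satisfies $F''F=2(F')^2$), and the explicit barrier $\phi=(R^2-r^2-2nt)_+$, then showing via Laplacian comparison and Hamilton's trick that $w=\phi F(v)$ has nonincreasing maximum in time; letting $R\to+\infty$ and then $\delta\to 0^+$ gives $u\geq 0$. Your route instead cites the general theory: $\Ric\geq 0$ implies stochastic completeness (Yau/Grigor'yan), which is equivalent to a parabolic maximum principle for \emph{bounded} solutions; to get the minimum principle for merely bounded-below supersolutions you need an extra (easy but nontrivial) truncation, e.g.\ observing that $\max(-u,0)$ is a bounded subsolution with nonpositive initial data. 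So your positivity step is valid but rests on a heavier external theorem, whereas the paper's is elementary and self-contained (and, as the authors remark, reproves uniqueness of bounded solutions of the heat equation under $\Ric\geq0$); what your version buys is brevity and a clear conceptual reason why only one-sided boundedness is needed.
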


In the case $V \equiv 1$, the estimate~\eqref{Ric1} easily implies that the conditions {\bf (VOL)} are satisfied for $\alpha = n$. In particular, as a corollary of Theorem~\ref{main}, we recover the triviality of the positive solutions of problem~\eqref{eq-1} up to the critical Fujita exponent (i.e. for $p \leq 1 + {2}/{n}$), in accordance with~\cite{qszhang1} and~\cite{qszhang2}. Moreover, we underline that previous similar Liouville--type theorems (see for instance~\cite{mastrolia2018} and references therein) require the nonnegativity of the solutions, while here we just assume their boundedness from below.

\begin{rem}\label{sharp}
The assumptions of Theorem~\ref{main} about the nonnegativity of the initial datum $u_0$ as well as the restriction on the exponent $p$ are optimal.

If we consider the function
\begin{equation*}
v(x,t) = - \left( (p-1)(t+1) \right)^{-\frac{1}{p-1}},
\end{equation*}
we easily see that $v$ satisfies $v_t = \Delta v + |v|^p$ in $M \times (0,+\infty)$ for any $p>1$, with $v (x, 0) = - (p-1) ^{-\frac{1}{p-1}} <0$, so that $v$ is a negative, {\em nonconstant}, bounded below, global solution to problem~\eqref{eq-1} for $V\equiv1$. Thus, the hypothesis on the nonnegativity of the initial datum cannot be removed.

In the Euclidean case $M = \R^n$, choosing $V\equiv1$, so that $\alpha = n$ in the conditions {\bf (VOL)} above, Fujita in~\cite{Fujita} proved that if $p > 1+{2}/{n}$, problem~\eqref{eq-1} admits a {\em nontrivial} positive global solution on $M\times[0,+\infty)$ (we mention that the critical case $p=1+2/n$,  ``missing'' in the work of Fujita, was first addressed by Zhang in~\cite{qszhang2} showing the nonexistence of nonzero solutions). Hence, the above restriction on the exponent $p$ is necessary.
\end{rem}

We will prove Theorem~\ref{main} in the next two sections: in Section~\ref{positivity} we will see that the boundedness from below implies the nonnegativity of classical ``supercaloric'' functions (i.e. with nonnegative heat operator) with nonnegative initial data, hence also of the solutions of problem~\eqref{eq-1}; in Section~\ref{triviality} we will show that, under the hypotheses of Theorem~\ref{main}, any nonnegative solution is constant.

\bigskip

\section{Positivity}\label{positivity}

In this section, thanks to an argument by Ecker and Huisken in~\cite{eckhui2}, we prove that any classical ``supercaloric'' function over $M \times [0,+\infty)$, with $\Ric \geq 0$, which is bounded from below and nonnegative at $t=0$, is globally nonnegative. This will clearly yield, as a particular case, that any bounded below solution of problem~\eqref{eq-1}, with $V>0$ and $u_0 \geq 0$, is nonnegative.\\

\begin{prop}\label{proppos}
Let $(M,g)$ be a complete $n$--dimensional Riemannian manifold with $\Ric \geq 0$. Let $u \in C^{2,1} (M \times (0,+\infty)) \cap C^0 (M \times [0,+\infty))$ be bounded from below and satisfies $u_t - \Delta u \geq 0$ in $M \times (0,+\infty)$ with $u (\cdot, 0) = u_0 \geq 0$, then $u \geq 0$.
\end{prop}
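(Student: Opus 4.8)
The plan is to reduce the statement to a weighted energy estimate for the negative part of $u$, exploiting a backward heat kernel weight in the spirit of the Ecker--Huisken monotonicity. Set $w=\max\{-u,0\}$, the negative part of $u$. Since $u$ is bounded from below, $w$ is bounded above by a constant $K\geq0$; since $u_0\geq0$ we have $w(\cdot,0)\equiv0$; and on the open set $\{u<0\}$ we have $w=-u$ with $w_t-\Delta w=-(u_t-\Delta u)\leq0$, so that $w$ is a (Lipschitz, distributional) subsolution of the heat equation, the relevant Kato-type inequality holding with $\grad w=0$ almost everywhere on $\{u=0\}$. It therefore suffices to show $w\equiv0$.

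Fix $T>0$, let $r(x)=\dist(x,x_0)$ and, for $t\in[0,T]$, introduce the weight
\[
\rho(x,t)=\exp\Big(-\frac{r(x)^2}{4(2T-t)}\Big)=e^{\psi},\qquad \psi=-\frac{r(x)^2}{4(2T-t)},
\]
which is bounded and decays like a Gaussian in $r$. I would study the energy $E(t)=\int_M w^2\rho\,d\mu$. Differentiating, using the subcaloricity of $w$, one integration by parts on the term $\int 2w\rho\,\Delta w$, and Young's inequality to absorb the resulting gradient cross term into the good term $-2\int\rho|\grad w|^2$, all the gradient terms cancel and one is left with
\[
\frac{d}{dt}\int_M w^2\rho\,d\mu\leq\int_M w^2\rho\Big(\tfrac12|\grad\psi|^2+\psi_t\Big)\,d\mu.
\]
Since $|\grad r|=1$ almost everywhere, a direct computation gives $\tfrac12|\grad\psi|^2+\psi_t=-\,r^2/\big(8(2T-t)^2\big)\leq0$, whence $E'(t)\leq0$. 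As $E(0)=0$ and $E\geq0$, this forces $E\equiv0$ on $[0,T]$, that is $w\equiv0$; letting $T\to+\infty$ yields $u\geq0$. Note that this scheme uses only the divergence theorem and the $r$- and $t$-derivatives of the weight, so it does not require a two-sided control of $\Delta r$.

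The role of the hypothesis $\Ric\geq0$ is to make this formal scheme rigorous on the noncompact manifold $M$. By Bishop--Gromov \eqref{Ric1} the volume grows at most polynomially, $\mu(B_R)\leq CR^n$, so that, weighted against the Gaussian $\rho$, all the integrals $\int_M w^2\rho\,d\mu$ are finite. To justify the differentiation under the integral sign and the integration by parts without boundary contributions at infinity, I would carry out the computation with a spatial cutoff $\eta_R$, equal to $1$ on $B_R$, supported in $B_{2R}$ and with $|\grad\eta_R|\leq C/R$, work with $E_R(t)=\int_M w^2\rho\,\eta_R^2\,d\mu$, and show that the extra terms produced by $\grad\eta_R$ are bounded by $\tfrac{C}{R^2}\int_{B_{2R}\setminus B_R}\rho\,d\mu$.

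The main obstacle is precisely this control at spatial infinity: one must verify that $\tfrac{1}{R^2}\int_{B_{2R}\setminus B_R}\rho\,d\mu\to0$ as $R\to+\infty$, which follows by slicing the annulus and combining the Gaussian decay of $\rho$ with the polynomial volume bound $\mu(B_R)\leq CR^n$. Together with the low regularity of $w=u^-$ (handled through the Kato inequality and a standard approximation), this is where the completeness of $M$ and the curvature assumption are genuinely used. Once the cutoff errors are shown to vanish, letting $R\to+\infty$ recovers $E'(t)\leq0$ and the conclusion follows as above.
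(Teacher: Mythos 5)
Your argument is correct in substance, but it takes a genuinely different route from the paper's. The paper runs a pointwise maximum--principle argument in the spirit of Ecker--Huisken: setting $v=-u$, composing with $F(s)=1/(A-s)$ (chosen so that $F''F=2(F')^2$), multiplying by the compactly supported barrier $\phi=(R^2-r^2-2nt)_+$, and showing via Hamilton's trick that $\max_{x}\phi\, F(v)$ is nonincreasing in time; there the hypothesis $\Ric\geq 0$ enters through the Laplacian comparison $\Delta r\leq (n-1)/r$, needed to make $\phi$ supercaloric, and the non-smoothness of $r$ at the cut locus must be handled in the sense of support functions. You instead run a weighted $L^2$ (Caccioppoli--type) estimate on $w=u^-$ with a backward Gaussian weight --- essentially the classical Tychonov/Grigor'yan uniqueness scheme --- and the curvature enters only through the Bishop--Gromov volume bound \eqref{Ric1}, used to kill the cutoff errors against the Gaussian decay. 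What each approach buys: yours never differentiates $r$ twice, so the cut locus is harmless ($|\nabla r|\leq 1$ a.e.\ suffices), and it actually proves the statement under any volume growth compatible with Gaussian weights (even exponential), hence it is more robust and more general than the paper's; the price is the distributional bookkeeping for $w=u^-$ (the Kato-type inequality, plus a smoothing of $s\mapsto s^+$ or Steklov averaging to justify testing the weak formulation with $2w\rho\,\eta_R^2$), which the paper's argument avoids entirely by working with classical derivatives throughout.

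One quantitative detail to repair when you write this up: if you use Young's inequality with the exact constant needed to cancel $-2\int\rho\,\eta_R^2|\nabla w|^2$ against the $\nabla\rho$ cross term, nothing is left to absorb the cutoff cross term coming from $\nabla\eta_R$. Your weight has slack, though: since $\tfrac12|\nabla\psi|^2+\psi_t=-r^2/\bigl(8(2T-t)^2\bigr)$ is strictly negative, you may absorb with the larger constant $\tfrac23|\nabla\psi|^2$ (still giving $\tfrac23|\nabla\psi|^2+\psi_t\leq 0$), reserving a margin of $\tfrac12\int\rho\,\eta_R^2|\nabla w|^2$ for the cutoff term; its remainder is then bounded by $CR^{-2}\sup w^2\int_{B_{2R}\setminus B_R}\rho\,d\mu\leq CR^{n-2}e^{-R^2/(8T)}\to 0$, exactly as you claim.
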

\begin{proof}
As $u$ is bounded from below, we have $u \geq - A/2$ for some constant $A>0$. We consider the function $v = - u$, which satisfies $v \leq A/2$ and
\begin{equation}\label{eq-u}
v_t - \Delta v \leq 0 \; \text{ in } \; M \times (0,+\infty).
\end{equation}
Let us define
\begin{equation*}
F(s) = \frac{1}{A-s}
\end{equation*}
and notice that the function $F$ is positive, increasing, convex and satisfies
\begin{equation}\label{huisk0}
F''(s) F(s) = 2 (F'(s))^2,
\end{equation}
in the interval $(-\infty,A)$. From inequality~\eqref{eq-u} we then have
\begin{equation}\label{eqF}
\partial_t F(v) - \Delta F(v) = F'(v) v_t - F'(v) \Delta v - F''(v) |\nabla v|^2 \leq - F''(v) |\nabla v|^2 \leq 0.
\end{equation}
For any $x_0\in M$, letting $r=d(x,x_0)$ where $d$ is the geodesic distance in $(M,g)$, we consider the function
\begin{equation*}
\phi(x,t) = (R^2 - r^2 - 2nt)_+
\end{equation*}
where the sign $+$ means {\em positive part}. By the {\em Laplacian comparison theorem} (see~\cite[Chapter~9, Section~3.3]{petersen1}), the condition $\Ric\geq0$ implies 
\begin{equation}\label{lapcom}
\Delta r \leq \frac{n-1}{r},
\end{equation}
{\em in the sense of support functions} (or {\em in the sense of viscosity}, see~\cite{crisli1} -- check also~\cite[Appendix~A]{manmasura} for comparison of the two notions), in particular, this inequality can be used in maximum principle arguments (see again~\cite[Chapter~9, Section~3]{petersen1}, for instance). 
Recalling that $|\nabla r|=1$, we have then
\begin{equation}\label{eqphi}
\phi_t - \Delta \phi = \Delta r^2 -2n = 2r \Delta r + 2 |\nabla r|^2 - 2n \leq 2 (n-1) + 2 -2n = 0
\end{equation}
in the set $\{\phi>0\}$.\\
Let us set $w(x,t)= \phi(x,t) F(v(x,t))$ which is a well defined function $w:M\times[0,+\infty)\to\R$, since $v < A$, positive and smooth in 
$\{w>0 \} \cap (M \times (0,+\infty))=\{\phi>0 \} \cap (M \times (0,+\infty))$.\\
Hence, thanks to formulas~\eqref{eqF} and~\eqref{eqphi}, we see that $w$ satisfies
\begin{equation}\label{huisk1}
\begin{split}
\partial_t w - \Delta w &= (\partial_t - \Delta) (\phi F(v))\\
&= \phi (\partial_t - \Delta) F(v) + F(v)(\partial_t - \Delta) \phi - 2 \nabla \phi \nabla F(v)\\ 
&\leq - \phi F''(v) |\nabla v|^2 - 2 \nabla \phi F'(v) \nabla v
\end{split}
\end{equation}
in $\{w>0\} \cap (M \times (0,+\infty))$.\\
Then, for any fixed $t>0$, the function $w(\cdot, t)$ has compact support and at any maximum point $x_t$ of $w(\cdot,t)$ with $w(x_t,t)>0$, we have
\begin{equation*}
0 = \nabla w (x_t,t) = F(v(x_t,t)) \nabla \phi (x_t,t) + \phi (x_t,t) F'(v(x_t,t)) \nabla v(x_t,t),
\end{equation*}
that is,
\begin{equation*}
\nabla \phi (x_t,t) = -\phi (x_t,t) \frac{F'(v(x_t,t))}{F(v(x_t,t))} \nabla v (x_t,t).
\end{equation*}
Thus, from formulas~\eqref{huisk0} and~\eqref{huisk1}, we conclude
\begin{equation}\label{huisk2}
\partial_t w - \Delta w \leq \phi |\nabla v|^2 \left[ \frac{2 (F'(v))^2}{F(v)} - F''(v) \right] = 0
\end{equation}
at a point $(x_t,t) \in \{w>0\} \cap (M \times (0,+\infty))$ as above.\\
Setting $w_{\mathrm{max}} (t) = \max_{x \in M} w(x,t)$, it follows by Hamilton's trick (see~\cite{hamilton2}) that for almost every $t \in (0, R^2/2n )$ there holds $w_{\max}' (t) \leq 0$, which integrated implies
\begin{equation}\label{huisk3}
w(x,t) \leq \max_{x\in M} w(x,0)
\end{equation}
for every $x\in M$ and $t\geq 0$.\\
Then, for any $0<\delta<1$, in the set 
\begin{equation*}
B_\delta = \bigl\{(x,t) \in M \times (0,+\infty)\,:\, R^2 \delta - r^2 - 2nt \geq 0\, \bigr\}
\end{equation*}
we have $\phi (x,t) \geq (1-\delta) R^2$ and
\begin{equation*}
\phi(x,t) F(v(x,t)) \leq \sup_{x\in M} F(v(x,0)) (R^2 - r^2) \leq \sup_{x\in M} F(v(x,0)) R^2,
\end{equation*}
hence,
\begin{equation*}
(1-\delta) R^2 F(v(x,t)) \leq\sup_{x\in M} F(v(x,0)) R^2.
\end{equation*}
This last inequality, from the definition of $F$, reads
\begin{equation*}
\frac{(1-\delta) R^2}{A-v(x,t)} \leq \frac{R^2}{A-{\sup_{x\in M}} v(x,0)},
\end{equation*}
that is,
\begin{equation*}
v(x,t) \leq (1-\delta) \sup_{x\in M} v(x,0) + \delta A\leq \delta A,
\end{equation*}
as $v(\cdot ,0)=-u_0\leq 0$. Being $v=-u$, we conclude
\begin{equation}\label{huisk}
u(x,t) \geq  -\delta A
\end{equation}
for any $(x,t) \in B_\delta$. Sending $R\to+\infty$, the set $B_\delta$ becomes $M\times(0,+\infty)$, hence the above inequality holds everywhere. Sending then $\delta\to0^+$, we get $u \geq0$ on $M\times[0,+\infty)$, as claimed.
\end{proof}

\begin{rem}
This argument clearly gives another proof of the fact that a bounded solution of the heat equation on manifolds with nonnegative Ricci curvature is unique (see for instance Theorem 5.1 in~\cite{liyau}).
\end{rem}

\bigskip

\section{Triviality}\label{triviality}

By Proposition~\ref{proppos} in the previous section, we know that, under the hypotheses of Theorem~\ref{main}, any classical solution of problem~\eqref{eq-1} bounded from below and with $u(\cdot,0) = u_0 \geq 0$ is nonnegative. In this section, we will prove that any nonnegative solution is constant, hence identically zero, following~\cite{qszhang1,qszhang2}.\\

\noindent From now on we will let $C$ be a positive constant which may change, from time to time, even within the same line. We introduce two cut--off functions $\phi$ and $\eta$ with the following properties:
\begin{enumerate}
\item $0 \leq \phi \leq 1$; $\phi(r) = 1$ for $0 \leq r \leq 1/2$; $\phi(r) = 0$ for $r \geq 1$; $-C \leq \phi'(r) \leq 0$; $|\phi''(r)| \leq C$; 
\item $0 \leq \eta \leq 1$; $\eta(t) = 1$ for $0 \leq r \leq 1/4$; $\eta(t) = 0$ for $t \geq 1$; $-C \leq \eta'(r) \leq 0$.
\end{enumerate}
Let $x_0\in M$ be the point appearing in conditions \textbf{(VOL)}. For any $R>0$ let us set $Q_R = B_R \times (0,R^2]$ (with $B_R = B_R(x_0)$ being the geodesic ball of radius $R$ and center $x_0$), and consider the cut--off function $\psi_R (x,t) = \phi_R (r) \eta_R (t)$, where $r = d (x,x_0)$, $\phi_R (r) = \phi (r/R)$ and $\eta_R (t) = \eta (t/R^2)$, with $\phi$ and $\eta$ as above. Clearly, we have
\begin{equation}\label{hypcutoff}
-\frac{C}{R} \leq \phi'_R \leq 0, \ \ \ \ \ \ \left| \phi''_R\right| \leq \frac{C}{R^2}\ \ \ \ \ \ \text{ and }\ \ \ \ -\frac{C}{R^2} \leq \eta'_R\leq 0.
\end{equation}
Let us consider 
\begin{equation*}
I_R = \int_{Q_R} V(x,t) u^{p} (x,t) \psi_{R}^{q} (x,t) \,d\mu \,dt,
\end{equation*}
with $q = p'$, from the superlinear heat equation we then have
\begin{equation*}
I_R = \int_{Q_R} [u_t (x,t) - \Delta u(x,t) ] \psi_{R}^{q} (x,t) \,d\mu \,dt = \int_{0}^{R^2}\int_{B_R} [u_t (x,t) - \Delta u(x,t) ] \psi_{R}^{q} (x,t) \,d\mu \,dt.
\end{equation*}
Since $\psi_R$ is Lipschitz and $\psi_R (\cdot,t)= 0$ on $\partial B_R$ for every $t \in [0,R^2]$, $\nabla \psi_R = 0$ on $\partial B_R \times [0,R^2]$, we get
\begin{equation*}
I_R = \int_{B_R} \int_{0}^{R^2} u_{t} (x,t) \psi_{R}^{q} (x,t) \,dt d\mu + \int_{0}^{R^2} \int_{B_R} \nabla u(x,t) \nabla \psi_{R}^{q} (x,t) \,d\mu \,dt.
\end{equation*}
Hence, integrating by parts and recalling the definition of $\psi_R$, we see that
\begin{equation}\label{zhang1} 
\begin{split} 
I_R &=\int_{B_R} u (x,\cdot) \psi_{R}^{q} (x,\cdot)\big\vert_{0}^{R^2} \,d\mu - \int_{Q_R} u(x,t) \phi_{R}^{q} (x) q \eta_{R}^{q-1} (t) \eta'_{R} (t) \,d\mu \,dt.\\
 &+ \int_{0}^{R^2} \int_{\partial B_R} u (x,t) \nabla \phi_{R}^{q} (x) \cdot \nu\,\eta_{R}^{q} (t) \,d\sigma \,dt -\int_{0}^{R^2} \int_{B_R} u (x,t) \Delta \phi_{R}^{q} (x) \eta_{R}^{q} (t) \,d\mu \,dt,
\end{split}
\end{equation}
where $\nu$ are $\sigma$ are respectively the unit outward normal and the canonically induced measure on $\partial B_R$.
Since $u \geq 0$, $\psi_R(x,R^2)=0$ and as $\phi'_R(R) = 0$ implies
\begin{equation*}
\nabla \phi_{R}^{q} \cdot \nu = q \phi_{R}^{q-1} \phi'_{R} \nabla r \cdot \nu = 0 \, \text{ on } \, \partial B_R,
\end{equation*}
by equation~\eqref{zhang1}, we get
\begin{equation*}
I_R \leq -\int_{Q_R} u(x,t) \phi_{R}^{q} (x) q \eta_{R}^{q-1} (t) \eta'_{R} (t) \,d\mu \,dt -\int_{0}^{R^2} \int_{B_R} u (x,t) \Delta \phi_{R}^{q} (x) \eta_{R}^{q} (t) \,d\mu \,dt.
\end{equation*}
As 
\begin{equation*}
\Delta \phi_{R}^{q} = q \phi_{R}^{q-1} \Delta \phi_{R} + q (q-1) \phi_{R}^{q-2}|\nabla \phi_{R}|^2 \geq q \phi_{R}^{q-1} \Delta \phi_{R},
\end{equation*}
plugging it into the above inequality, we get
\begin{equation*}
I_R \leq -\int_{Q_R} u(x,t) \phi_{R}^{q} (x) q \eta_{R}^{q-1} (t) \eta'_{R} (t) \,d\mu \,dt -\int_{0}^{R^2} \int_{B_R} u (x,t) q (\phi_{R}^{q-1} \Delta \phi_{R}) (x) \eta_{R}^{q} (t) \,d\mu \,dt.
\end{equation*}
Taking into account the supports for $\phi_R$ and $\eta_R$ we then obtain,
\begin{equation*}
\begin{split}
I_R \leq&\, -\int_{R^{2}/4}^{R^{2}} \int_{B_R} u(x,t) \phi_{R}^{q} (x) q \eta_{R}^{q-1} (t) \eta'_{R} (t) \,d\mu \,dt\\
 &-\int_{0}^{R^2} \int_{B_R \setminus B_{R/2}} u (x,t) q (\phi_{R}^{q-1} \Delta \phi_{R}) (x) \eta_{R}^{q} (t)\,d\mu \,dt.
\end{split}
\end{equation*}
By properties~\eqref{hypcutoff} of $\phi_R$, recalling that $|\nabla r|=1$ and thanks to inequality~\eqref{lapcom}, we have
\begin{equation*}
\Delta \phi_{R} = |\nabla r|^2 \phi''_{R} + \Delta r \phi'_{R} \geq \phi''_{R} + \frac{n-1}{r} \phi'_{R} \geq - \frac{C}{R^2} \quad \text{ in } B_R \setminus B_{R/2},
\end{equation*}
which implies
\begin{equation*}
I_R \leq \frac{C}{R^2} \biggl[ \int_{R^{2}/4}^{R^{2}} \int_{B_R} u(x,t) \phi_{R}^{q} (x) \eta_{R}^{q-1} (t) \,d\mu \,dt +
\int_{0}^{R^2} \int_{B_R \setminus B_{R/2}} u (x,t) \phi_{R}^{q-1} (x) \eta_{R}^{q} (t)\,d\mu \,dt\biggr].
\end{equation*}
Moreover, since $\phi_R, \eta_R \leq 1$, there holds
\begin{equation}\label{zhang3}
I_R \leq \frac{C}{R^2} \biggl[\int_{R^{2}/4}^{R^{2}} \int_{B_R} u(x,t) \psi_{R}^{q-1} (x,t) \,d\mu \,dt + \int_{0}^{R^2} \int_{B_R \setminus B_{R/2}} u (x,t) \psi_{R}^{q-1} (x,t) \,d\mu \,dt\biggr].
\end{equation}
Now, applying H\"older inequality to both terms in the right hand side of this inequality, we obtain
\begin{equation*}
\begin{split}
&\int_{R^{2}/4}^{R^{2}} \int_{B_R} u(x,t) \psi_{R}^{q-1} (x,t) \,d\mu \,dt = \int_{R^{2}/4}^{R^{2}} \int_{B_R} V^{\frac{1}{p}} (x) u(x,t) \psi_{R}^{q-1} (x,t) V^{-\frac{1}{p}} (x) \,d\mu \,dt \\
& \leq \biggl( \int_{R^{2}/4}^{R^{2}} \int_{B_R} V(x,t) u^p (x,t) \psi_{R}^{q} (x,t) \,d\mu \,dt\biggr)^{\frac{1}{p}} \biggl( \int_{R^{2}/4}^{R^{2}} \int_{B_R} V(x,t)^{-\frac{1}{p-1}} \,d\mu \,dt\biggr)^{\frac{1}{q}},
\end{split}
\end{equation*}
and
\begin{equation*}
\begin{split}
&\int_{0}^{R^2} \int_{B_R \setminus B_{R/2}} u(x,t) \psi_{R}^{q-1} (x,t) \,d\mu \,dt = \int_{0}^{R^2} \int_{B_R \setminus B_{R/2}} V^{\frac{1}{p}} (x) u(x,t) \psi_{R}^{q-1} (x,t) V^{-\frac{1}{p}} (x)\,d\mu \,dt \\
&\leq \biggl( \int_{0}^{R^2} \int_{B_R \setminus B_{R/2}} V(x,t) u^p (x,t) \psi_{R}^{q} (x,t) \,d\mu \,dt\biggr)^{\frac{1}{p}} \biggl( \int_{0}^{R^2} \int_{B_R \setminus B_{R/2}} V(x,t)^{-\frac{1}{p-1}} \,d\mu \,dt\biggr)^{\frac{1}{q}},
\end{split}
\end{equation*}
which, substituted into inequality~\eqref{zhang3} give
\begin{equation}\label{zhang4}
\begin{split}
 I_R &\leq\,\frac{C}{R^2} \biggl(\int_{R^{2}/4}^{R^{2}} \int_{B_R} V(x,t) u^p (x,t) \psi_{R}^{q} (x,t) \,d\mu \,dt \biggr)^{\frac{1}{p}}\biggl( \int_{R^{2}/4}^{R^{2}} \int_{B_R} V(x,t)^{-\frac{1}{p-1}} \,d\mu \,dt\biggr)^{\frac{1}{q}} \\
&+ \frac{C}{R^2} \biggl( \int_{0}^{R^2} \int_{B_R \setminus B_{R/2}} V(x,t) u^p (x,t) \psi_{R}^{q} (x,t) \,d\mu \,dt \biggr)^{\frac{1}{p}}\biggl( \int_{0}^{R^2} \int_{B_R \setminus B_{R/2}} V(x,t)^{-\frac{1}{p-1}} \,d\mu \,dt\biggr)^{\frac{1}{q}}.
\end{split}
\end{equation}
This estimate then implies 
\begin{equation*}
I_R \leq I_{R}^{\frac{1}{p}} \frac{C}{R^2} \biggl[ \biggl( \int_{R^{2}/4}^{R^{2}} \int_{B_R} V(x,t)^{-\frac{1}{p-1}} \,d\mu \,dt\biggr)^{\frac{1}{q}} + \biggl( \int_{0}^{R^2} \int_{B_R \setminus B_{R/2}} V(x,t)^{-\frac{1}{p-1}} \,d\mu \,dt\biggr)^{\frac{1}{q}} \biggr],
\end{equation*}
that is,
\begin{equation}\label{zhangfinal0}
I_{R}^{1-\frac{1}{p}} \leq \frac{C}{R^2} \biggl[ \biggl( \int_{R^{2}/4}^{R^{2}} \int_{B_R} V(x,t)^{-\frac{1}{p-1}} \,d\mu \,dt\biggr)^{\frac{1}{q}} + \biggl( \int_{0}^{R^2} \int_{B_R \setminus B_{R/2}} V(x,t)^{-\frac{1}{p-1}} \,d\mu \,dt\biggr)^{\frac{1}{q}} \biggr].
\end{equation}
Now, since conditions \textbf{(VOL)} hold, substituting them into inequality~\eqref{zhangfinal0}, we obtain
\begin{equation*}
I_{R}^{\frac{1}{q}} = I_{R}^{1-\frac{1}{p}} \leq C R^{\frac{2+\alpha}{q}-2} = C\left(R^{2+\alpha-2q}\right)^{\frac{1}{q}},
\end{equation*}
that is,
\begin{equation}\label{zhangfinal}
I_{R} \leq C R^{2+\alpha-2q}.
\end{equation}
Finally, if $1 < p < 1 + 2/\alpha$, we have that $2+\alpha-2q < 0$, hence, if we take the limit as $R \to +\infty$, we conclude 
\begin{equation*}
\lim_{R \to +\infty} I_R = \int_{M\times(0,+\infty)} V(x,t) u^{p} (x,t) \,d\mu \,dt = 0,
\end{equation*}
which, since $V>0$, clearly implies $u = 0$, as claimed. Moreover, if $p=1 + 2/\alpha$, we have that $2+\alpha-2q = 0$, so that  
\begin{equation*}
\lim_{R \to +\infty} I_R = \int_{M\times(0,+\infty)} V(x,t) u^{p} (x,t) \,d\mu \,dt \leq C.
\end{equation*}
Thus, from inequality~\eqref{zhang4} we deduce
\begin{equation*}
I_R \leq\,C\biggl[\biggl(\int_{R^{2}/4}^{R^{2}} \int_{B_R} V(x,t) u^p (x,t) \,d\mu \,dt \biggr)^{\frac{1}{p}} \\
+ \biggl( \int_{0}^{R^2} \int_{B_R \setminus B_{R/2}} V(x,t) u^p (x,t) \,d\mu \,dt \biggr)^{\frac{1}{p}}\biggr],
\end{equation*}
whose RHS tends to zero, as $R \to +\infty$, and we can conclude again that $u = 0$.\qed

\begin{rem}
We point out that the same result (with the same proof) also holds for \emph{supersolutions} of problem~\eqref{eq-1}, i.~e. for $u \in C^{2,1}(M \times (0,+\infty)) \cap C^0 (M \times [0,+\infty))$ satisfying 
\begin{equation*}
u_t\geq \Delta u+ |u|^{p} V \quad \text{ in } M \times (0,+\infty) \quad \text{ and } \quad u (\cdot,0) = u_0\geq 0.
\end{equation*}
\end{rem}

\bibliographystyle{amsplain}
\bibliography{biblio}

\end{document}